\begin{document}

\newtheorem{thm}{Theorem}
\newtheorem{lem}[thm]{Lemma}
\newtheorem{cor}[thm]{Corollary}
\newtheorem{prop}[thm]{Proposition}

\theoremstyle{remark}
\newtheorem{rem}[thm]{Remark}


\newcommand{\RR}{\mathbb R}
\newcommand{\R}{\mathbb R}
\newcommand{\be}{\begin{equation}} 
\newcommand{\ee}{\end{equation}}
\newcommand{\bea}{\begin{eqnarray}} 
\newcommand{\eea}{\end{eqnarray}}
\newcommand{\bean}{\begin{eqnarray*}} 
\newcommand{\eean}{\end{eqnarray*}}
\newcommand{\rf}[1]{(\ref {#1})}
\newcommand{\un}{{\rm 1\!\!I}}
\newcommand{\dx}{\,{\rm d}x}
\newcommand{\dy}{\,{\rm d}y}
\newcommand{\dr}{\,{\rm d}r} 
\newcommand{\dz}{\,{\rm d}z}
\newcommand{\dt}{\,{\rm d}t}
\newcommand{\ds}{\,{\rm d}s}
\newcommand{\dsi}{\, {\rm d}\sigma}
\newcommand{\dxi}{\,{\rm d}\xi} 
\newcommand{\la}{\langle}
\newcommand{\ra}{\rangle} 
\def\xn{|\!|\!|}
\def\e{\varepsilon}
\def\p{\partial}
\def\div{\nabla\cdot}
\def\f{\varphi}
\def\r{\varrho}


\title[Parabolic-parabolic Keller-Segel system]{Large global-in-time solutions\\ of the parabolic-parabolic \\ Keller-Segel system on the plane}

\author{Piotr Biler}
\address[P. Biler]{
 Instytut Matematyczny, Uniwersytet Wroc\l awski,
 pl. Grunwaldzki 2/4, 50-384 Wroc\-\l aw, Poland}
\email{piotr.biler@math.uni.wroc.pl}

\author{ Ignacio Guerra}
\address[I. Guerra]{
Departamento de Matem\'atica y de la Ciencia de Computaci\'on,
Universidad de Santiago de Chile, Chile}
\email{ignacio.guerra@usach.cl}

\author{Grzegorz Karch}
\address[G. Karch]{
 Instytut Matematyczny, Uniwersytet Wroc\l awski,
 pl. Grunwaldzki 2/4, 50-384 Wroc\-\l aw, Poland}
\email{grzegorz.karch@math.uni.wroc.pl}
\urladdr {http://www.math.uni.wroc.pl/~karch}


\date{\today}
\maketitle

\begin{abstract} 
\noindent
As it is well known, the parabolic-elliptic Keller-Segel system of chemotaxis  on the plane has global-in-time regular nonnegative solutions with total mass below the critical value $8\pi$. Solutions with mass above $8\pi$ blow up in a finite time. 
We show that the case of the parabolic-parabolic Keller-Segel is different: each mass may lead to a global-in-time-solution, even if the initial data is a  finite signed measure. 
 These solutions need not be unique, even if we limit ourselves to nonnegative solutions. 

\bigskip

{\noindent{\bf Key words and phrases:}{ chemotaxis, parabolic-parabolic  Keller-Segel model, large global-in-time  solutions}}

{\noindent {\bf 2000 Mathematics Subject Classification:}  35Q92, 35K40.} 
\end{abstract}


\section{Introduction} 
We consider in this paper the simplest doubly parabolic version of the Keller-Segel model of chemotaxis 
\bea 
u_t &=&\nabla\cdot(\nabla u-u\nabla v),\ \ \ x\in\mathbb R^2,\ t>0,\label{pop}\\
\tau v_t &=& \Delta v-\gamma v+u,\ \ \ \ \ \ \ x\in\mathbb R^2,\ t>0,\label{ch}
\eea
where $u$ denotes the density of a population and $v$ -- the density of a chemical, called chemoattractant, which is secreted by the microorganisms and makes them to attract themselves. The equations are supplemented with the initial data 
\be
u(\cdot,0)=u_0,\ \ \ v(\cdot,0)=0,\label{ini}
\ee
which we suppose to be a finite Radon measure $u_0\in{\mathcal M}(\mathbb R^2)$. 
We  choose  $v(x,0)=0$
 for simplicity, however, the analogous computations could be done with every sufficiently regular $v(x,0)$. 
Here, the constant parameter  $\tau>0$ is related to the diffusion rate of the chemical, and usually in applications $\tau$ is small since the chemoattractant diffuses much faster than the population. We are interested, however,  in arbitrary positive values of $\tau$. The coefficient $\gamma\ge 0$ is the consumption rate of the chemical.

It is well known that mass $M=\int u(x,t) \dx$ is conserved for solutions of \rf{pop}--\rf{ini}. Moreover, positivity of the initial data is preserved during the evolution: $u(x,t)\ge 0$, $v(x,t)\ge 0$, see the references listed below. However, we will consider in the sequel solutions of arbitrary sign, and thus, we will not use that positivity-preserving property. 

The limit case $\tau=0$ is called the parabolic-elliptic Keller-Segel model, and  has been much more studied than the doubly parabolic one with $\tau>0$. For the relations between those two systems as $\tau\searrow 0$, see, e.g., \cite{R}, \cite{BB} and \cite{Lem}. 

Let us first recall that in the parabolic-elliptic case ($\tau=0$) the existence of solutions of \rf{pop}--\rf{ch} has been studied in, e.g.,  \cite{B-SM}, \cite{SS}, \cite{KS-JEE}, \cite{BM}. In particular, for $M>8\pi$,  the corresponding 
 nonnegative solution cannot be global in time. Moreover, 
 measures as initial conditions with atoms bigger than $8\pi$ were obstructions even for the local-in-time existence of solutions. Self-similar asymptotics for $M<8\pi$ has been proved in  \cite{BDP}, and asymptotics at the critical value $M=8\pi$ has been considered in \cite{BKLN} (the radial case) and \cite{BCM} (the general case). 
Continuation past blowup time was the topic of \cite{B-BCP} (the radial case) and \cite{DS} (the general case).

In the following theorem, which is the main result of this note, we show that one should not expect any critical mass 
determining the existence of solutions to parabolic-parabolic Keller-Segel model 
\rf{pop}--\rf{ini}
with sufficiently large $\tau>0$.

\begin{thm}
For each $u_0\in {\mathcal M}(\mathbb R^2)$ there exists $\tau(u_0)>0$ such that for all $\tau\ge\tau(u_0)$ the Cauchy problem \rf{pop}--\rf{ini} has a global-in-time mild solution satisfying $u\in {\mathcal C}_{\rm w}([0,\infty);\mathcal M(\mathbb R^2))$. This is a classical solution of the system \rf{pop}--\rf{ch}  for $t>0$, and satisfies  
\be
\sup_{t>0}t^{1-1/p}\|u(t)\|_p<\infty\label{esti}
\ee 
for each $p\in[1,\infty]$. 
\end{thm}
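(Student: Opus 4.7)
My plan is to recast the system as a single fixed-point equation for $u$ in a scaling-invariant space, and to exploit the factor $\tau^{-1}$ that appears explicitly in the Duhamel representation of $v$ to make the quadratic term contractive as soon as $\tau$ is large enough relative to $\|u_0\|_{\mathcal M}$.

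Solving \rf{ch} with $v(0)=0$ gives $v_u(t)=\tau^{-1}\int_0^t e^{(t-s)(\Delta-\gamma)/\tau}u(s)\ds$, so \rf{pop}--\rf{ini} reduces to the mild equation
\[
u(t)=e^{t\Delta}u_0+B(u,u)(t),\qquad B(u,w)(t):=-\int_0^t\nabla\cdot e^{(t-s)\Delta}\bigl(w\,\nabla v_u\bigr)(s)\ds.
\]
I would fix an exponent $p>1$, say $p=2$, and work in the scaling-invariant space $X=\{u:\|u\|_X:=\sup_{t>0}t^{1-1/p}\|u(t)\|_p<\infty\}$. Since $\|G_t\|_p=c_pt^{-(1-1/p)}$, Young's inequality yields $\|e^{\cdot\Delta}u_0\|_X\le C\|u_0\|_{\mathcal M}$.

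The heart of the argument is the bilinear estimate. Fix an auxiliary exponent $q$ (for $p=2$ take $q=4$) and set $\alpha:=1/2+1/p-1/q$. The rescaling of the Gaussian gradient together with Young's inequality gives $\|\nabla e^{t(\Delta-\gamma)/\tau}f\|_q\le C\,(t/\tau)^{-\alpha}\|f\|_p$, which combined with the $\tau^{-1}$ prefactor produces
\[
\|\nabla v_u(t)\|_q\le C\,\tau^{\alpha-1}\,\|u\|_X\,t^{-(1/2-1/q)}.
\]
H\"older $\|w\,\nabla v_u\|_r\le\|w\|_p\|\nabla v_u\|_q$ (with $1/r=1/p+1/q$) together with $\|\nabla e^{t\Delta}\|_{L^r\to L^p}\le Ct^{-1/2-1/q}$ and the Beta integral identity then yield
\[
\|B(u,w)\|_X\le C\,\tau^{-\beta}\|u\|_X\|w\|_X,\qquad \beta:=1-\alpha=1/2-1/p+1/q>0,
\]
so that for $\tau$ large enough, namely $\tau\ge(4C^2\|u_0\|_{\mathcal M})^{1/\beta}=:\tau(u_0)$, the abstract Picard lemma produces a fixed point in a ball of $X$ of radius $\sim\|u_0\|_{\mathcal M}$.

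Once $u\in X$ is obtained, \rf{esti} for every $p\in[1,\infty]$ follows by a short heat-semigroup bootstrap on the Duhamel identity; classical regularity for $t>0$ comes from standard parabolic theory applied to $u_t=\Delta u-\nabla\cdot(u\nabla v)$ once the drift has been placed in enough spaces; and the weak-$*$ continuity $u\in\mathcal C_{\rm w}([0,\infty);\mathcal M)$ follows by testing the Duhamel representation against bounded continuous functions. The delicate point of the scheme is the bilinear estimate: since $v$ depends \emph{linearly} on $u$, only one factor $\tau^{-1}$ is available, and part of it must be spent against the rescaling $\sigma\mapsto\sigma/\tau$ of the heat kernel in order to extract a genuinely negative power of $\tau$. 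This forces $(p,q)$ to lie in the narrow window $p>1$, $q>2$, $1/p+1/q>1/2$, $1/p-1/q<1/2$, and finding such a pair which is simultaneously compatible with the scaling invariance of $X$ is the main technical task.
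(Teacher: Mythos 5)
Your proposal is correct and follows essentially the same route as the paper: the same mild formulation, the same scaling-invariant space $\sup_{t>0}t^{1-1/p}\|u(t)\|_p$, and the same mechanism of extracting the negative power $\tau^{-(1/2-1/p+1/q)}$ from the bilinear estimate so that largeness of $\tau$ compensates for largeness of $\|u_0\|_{\mathcal M(\R^2)}$; your concrete choice $p=2$, $q=4$ sits at the edge of the paper's window $\frac43<p\le 2\le p'<q<\frac{2p}{2-p}$ but satisfies all the required inequalities. The remaining steps (bootstrap to all $p\in[1,\infty]$, parabolic regularity, weak-$*$ continuity at $t=0$) are sketched in the same way the paper carries them out.
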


Theorem 1 improves  results for the  parabolic-parabolic Keller-Segel model ($\tau>0$)  in \cite{B-AMSA1}, \cite{CC} and \cite{M}, where the global existence of solutions for $M<8\pi$ on the whole plane has been considered. 
Self-similar solutions with large masses (above $8\pi$) have been constructed in \cite{BCD}. 
Blowup of radially symmetric solutions in balls is a very recent result, \cite{MW}. 
On the other hand, there exist large (unstable) stationary solutions in balls, see \cite[Ch. 6]{B-AMSA1}. 

 All those results show that, unlike the parabolic-elliptic case, there is no threshold value of mass for the local existence of solutions as well as for the global-in-time existence. 

\begin{rem}\label{rem:self}
In general, solutions of the Cauchy problem \rf{pop}--\rf{ini} constructed in Theorem 1  need not be unique. This striking property  is seen when we consider certain radially symmetric nonnegative self-similar solutions for  the system \rf{pop}--\rf{ch} with $\gamma=0$ --- which are of the scaling invariant form 
\be
u(x,t)=\frac{1}{t}U\left(\frac{|x|}{\sqrt{t}}\right),\ \ \ 
v(x,t)=V\left(\frac{|x|}{\sqrt{t}}\right),\label{ss}
\ee
for some functions $U$, $V$ of one variable. 
They   have been constructed using ODE methods in \cite{BCD} and  they correspond to the initial data $u_0=M\delta_0$, $v_0=0$, with the Dirac measure $\delta_0$. 
In particular, for $0<\tau\le \frac12$, they exist exactly in the range $M\in[0,8\pi)$. 
However, for $\tau>\tau^\ast$ ($\approx 0.64  $) there exist self-similar solutions with $M\in[0,M_\tau]$ with at least two solutions for each $M\in(8\pi,M_\tau)$,  $M_\tau>8\pi$, and even $\lim_{\tau\to\infty}M_\tau =\infty$ since it follows  from \cite[Th. 4]{BCD} that $M_\tau\ge  \frac{4\pi}{{\rm e}}\frac{\tau-1}{\log \tau}$.  
\end{rem}
\medskip

Finally, let us formulate an important consequence of out result. For arbitrary $M>0$ and for all $\tau\geq \tau(M)$, Theorem 1 provides us with a  solution of the Cauchy problem \rf{pop}--\rf{ini} with $u_0=M\delta_0$ and $\gamma=0$.
By a standard argument, one may show that it has  the self-similar form \rf{ss}.  
On the other hand, by Remark \ref{rem:self}, there exists another self-similar solution with the same value of $M>8\pi$. This is, to the best of our knowledge, {\it the first nontrivial example of nonuniqueness} of mild solutions to a chemotaxis system with measures as initial conditions.

\section{Preliminaries} 
Let us denote by ${\rm e}^{t\Delta}$ the heat semigroup on $\mathbb R^2$ acting as the convolution with the Gauss--Weierstrass kernel 
$G(x,t)=(4\pi t)^{-1}\exp\left(-|x|^2/(4t)\right).$ 
The standard estimates for the regularization effect  and the decay rates for solutions of the heat equation have the following form  
\be
\|{\rm e}^{t\Delta}z\|_q\le Ct^{1/q-1/r}\|z\|_r\label{lin1}
\ee
and 
\be 
\|\nabla{\rm e}^{t\Delta}z\|_q\le Ct^{-1/2+1/q-1/r}\|z\|_r\label{lin2}
\ee
 for all $1\le r\le q\le\infty$. Here, $\|\cdot\|_q$ denotes the usual $L^q(\mathbb R^2)$ norm, and $C$'s are generic constants independent of $t$, $u$, $z$, ... ,  which may, however, vary from line to line. ${\mathcal M}(\mathbb R^2)$ denotes the Banach space of finite Radon measures on $\mathbb R^2$ with the usual total variation norm, and the weak convergence tested with all continuous compactly supported functions $\varphi\in C_0(\R^2)$. 

An immediate consequence of \rf{lin1} is the bound
$$ 
\sup_{t>0}t^{1-1/p}\|{\rm e}^{t\Delta}u_0\|_p\le C\|u_0\|_1.$$ 
Analogously, it is known that the inequality 
\be 
 \sup_{t>0}t^{1-1/p}\|{\rm e}^{t\Delta}\mu\|_p\le C\|\mu\|_{\mathcal M(\mathbb R^2)}\label{lin4}
\ee
holds true for all $\mu\in \mathcal M(\mathbb R^2)$.

The {\it mild} formulation of   system \rf{pop}--\rf{ch}, together with  initial conditions \rf{ini} is the integral equation (a. k. a. the Duhamel formula) 
\be
u(t)={\rm e}^{t\Delta}u_0+B(u,u)(t),\label{D}
\ee
where the quadratic form $B$ is defined by 
\be
B(u,z)(t)=-\int_0^t\left(\nabla{\rm e}^{(t-s)\Delta}\right)\cdot\left(u(s)\, Lz(s)\right)\ds, \label{B}
\ee 
with the solution operator of \rf{ch} 
\be 
Lz(t)=\tau^{-1}\int_0^t\left(\nabla{\rm e}^{\tau^{-1}(t-s)(\Delta-\gamma)}\right) z(s)\ds. \label{L}
\ee
The existence of solutions of the quadratic equation \rf{D} is established by the usual 
approach using the contraction argument in a suitable functional space of vector-valued functions. In our case, that space is denoted by 
$$ {\mathcal E}_p=\{u\in L^\infty_{\rm loc}((0,\infty); L^p(\mathbb R^2)):\ \ \sup_{t>0}t^{1-1/p}\|u(t)\|_p<\infty\},$$ 
and  the norm $\xn\, .\, \xn_p$ in ${\mathcal E}_p$ is defined as 
\be
\xn u\xn_p\equiv \sup_{t>0}t^{1-1/p}\|u(t)\|_p<\infty.\label{norm}
\ee 
  
Then, we will show that actually $u\in {\mathcal X}$ where 
$$\mathcal X={\mathcal C}_{\rm w}([0,\infty); \mathcal M(\mathbb R^2))\cap {\mathcal E}_p.$$

\begin{rem}
Note that solutions of the equation $u=y_0+B(u,u)$ (more general than \rf{D}) in a Banach space $({\mathcal Y},\|.\|_{\mathcal Y})$  provided by that contraction argument (or, equivalently, by the Picard iteration scheme) are locally unique, but they need not be unique in general as Fig. 1 shows for ${\mathcal Y}=\mathbb R$
and for the quadratic equation 
 $u=y_0+\eta u^2$ with  a fixed $\eta>0$ and 
$|y_0|<\frac{1}{4\eta}$. 
\end{rem}

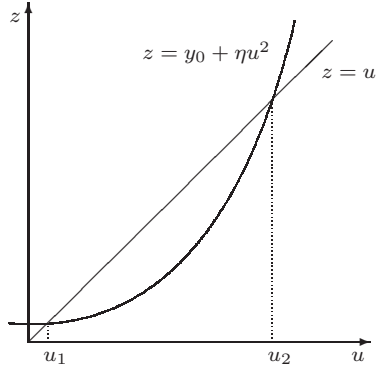
\begin{figure}
  \setlength{\unitlength}{0.5mm}

{\tiny

\begin{picture}(100,100)

\put(10,10){\vector(0, 1){90}}
\put(10,10){\vector(1, 0){90}}

\put(95,5){$u$}
\put(5,95){$z$}
\put(87,80){$z=u$}
\put(40,85){$z=y_0+\eta u^2$}

\put(14,5){$u_1$}
\put(73,5){$u_2$}

\multiput(74,10)(0,1){65}{\line(0,1){0.1}}
\multiput(15,10)(0,1){5}{\line(0,1){0.1}}

\put(10,10){\line(1, 1){80}}

\qbezier(5,15)(60,10)(80,95)

  \end{picture}
  }
  \caption{Two solutions $u_1$ and $u_2$ of the quadratic equation $u=y_0+\eta u^2$.}
\end{figure}


\section{Proof of the main result	} 

The proof of Theorem 1 is split into two parts. In the first, solutions of \rf{D} are constructed in $\mathcal E_p$ with a fixed $p\in\left(\frac43,2\right)$. Then, they are shown to attain the initial data in the weak sense, i.e. they belong to $\mathcal X={\mathcal C}_{\rm w}([0,\infty); \mathcal M(\mathbb R^2))\cap {\mathcal E}_p.$  

The first part of the proof is based on two lemmata. 

\begin{lem}\label{fix}
If $\xn B(u,z)\xn_p\le \eta\xn u\xn_p\, \xn z \xn_p$ and 
$\xn {\rm e}^{t\Delta}u_0\xn_p \le R<\frac{1}{4\eta}$, then equation \rf{D} has a solution which is  unique in the ball of radius $2R$ in the space  $\mathcal E_p$. 
Moreover,  these solutions depend continuously on the initial data, i.e. $\xn u-\tilde u\xn_p\le C\xn {\rm e}^{t\Delta}(u_0-\tilde u_0)\xn_p$. 
\end{lem}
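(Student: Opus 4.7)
The plan is to apply the standard Banach contraction / Kato--Fujita scheme to the mapping $\Phi(u)=\mathrm{e}^{t\Delta}u_0+B(u,u)$ on the closed ball $\overline B_{2R}=\{u\in{\mathcal E}_p:\xn u\xn_p\le 2R\}$ equipped with the metric induced by $\xn\cdot\xn_p$ (which is complete, being a closed subset of the Banach space ${\mathcal E}_p$). This is the same abstract scheme that underlies analogous mild-solution constructions in the chemotaxis and Navier--Stokes literature; its applicability here rests solely on the bilinearity of $B$, which is transparent from formulas \rf{B}--\rf{L}.

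First, I would check that $\Phi$ maps $\overline B_{2R}$ into itself. By the two hypotheses,
\[
\xn\Phi(u)\xn_p\le\xn\mathrm{e}^{t\Delta}u_0\xn_p+\eta\xn u\xn_p^{\,2}\le R+4\eta R^2<2R,
\]
where the strict inequality uses $4\eta R<1$. Next, using the bilinear identity
\[
B(u,u)-B(\widetilde u,\widetilde u)=B(u-\widetilde u,u)+B(\widetilde u,u-\widetilde u)
\]
together with the bilinear bound on $B$, I would establish that $\Phi$ is a strict contraction:
\[
\xn\Phi(u)-\Phi(\widetilde u)\xn_p\le\eta\bigl(\xn u\xn_p+\xn\widetilde u\xn_p\bigr)\xn u-\widetilde u\xn_p\le 4\eta R\,\xn u-\widetilde u\xn_p,
\]
and $4\eta R<1$. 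Banach's fixed point theorem then delivers a unique fixed point $u\in\overline B_{2R}$, which is the desired mild solution of \rf{D}.

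For the continuous dependence on initial data, I would subtract the mild formulations for two solutions $u,\widetilde u\in\overline B_{2R}$ corresponding to data $u_0,\widetilde u_0$, apply the same bilinear decomposition to $B(u,u)-B(\widetilde u,\widetilde u)$, take $\xn\cdot\xn_p$-norms, and absorb the resulting contraction term $4\eta R\,\xn u-\widetilde u\xn_p$ on the left-hand side. This yields
\[
\xn u-\widetilde u\xn_p\le\frac{1}{1-4\eta R}\,\xn\mathrm{e}^{t\Delta}(u_0-\widetilde u_0)\xn_p,
\]
as claimed, with $C=(1-4\eta R)^{-1}$. The argument itself presents no real obstacle: it is a purely abstract contraction-mapping statement for the quadratic equation $u=y_0+B(u,u)$. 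The substantive work of the paper is elsewhere, namely in verifying the two \emph{hypotheses} of the lemma: establishing the bilinear estimate $\xn B(u,z)\xn_p\le\eta\xn u\xn_p\xn z\xn_p$ for a usable constant $\eta=\eta(\tau,\gamma)$, and arranging $\xn\mathrm{e}^{t\Delta}u_0\xn_p\le R<\frac{1}{4\eta}$ — which, in view of the Dirichlet kernel appearing in \rf{L}, will be achieved by taking $\tau$ sufficiently large relative to the measure $u_0$, so that $\eta$ shrinks enough to beat the essentially $\tau$-independent bound on $\xn\mathrm{e}^{t\Delta}u_0\xn_p$ coming from \rf{lin4}.
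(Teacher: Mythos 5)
Your proof is correct and is exactly the argument the paper has in mind: its own proof of this lemma is a one-line appeal to "standard reasoning based on the Banach contraction theorem applied to $u\mapsto \mathrm{e}^{t\Delta}u_0+B(u,u)$ in the ball of radius $2R$," and you have simply written out the self-mapping bound $R+4\eta R^2<2R$, the contraction factor $4\eta R<1$ via the bilinear decomposition, and the absorption step giving $C=(1-4\eta R)^{-1}$. (One terminological slip in your closing remark: the kernel in \rf{L} is the heat kernel of $\mathrm{e}^{\tau^{-1}t(\Delta-\gamma)}$, not a Dirichlet kernel.)
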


\begin{proof}
 This is a standard reasoning based on the Banach contraction theorem applied to the operator ${\mathcal E}_p\ni u\mapsto {\rm e}^{t\Delta}u_0+B(u,u)(t)$ in the ball of radius $2R$ in the space $\mathcal E_p$. 
\end{proof}

\begin{lem}\label{form}
Let $p\in\left(\frac43,2\right)$.  
The bilinear form $B$ is bounded from ${\mathcal E}_p\times{\mathcal E}_p$ into ${\mathcal E}_p$: 
$$\xn B(u,z)\xn_p\le \eta\xn u\xn_p\, \xn z \xn_p$$ 
with a constant $\eta=\eta(\tau)$ independent of $u$, $z$ and $\gamma$, such that $\eta(\tau)\to 0$ as $\tau\to\infty$. 
\end{lem}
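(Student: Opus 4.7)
The plan is to reduce the double time integral defining $B(u,z)$ to a product of two Euler Beta integrals, using the gradient smoothing estimate \rf{lin2} twice. First, in the outer integral \rf{B} I would apply \rf{lin2} to the kernel $\nabla e^{(t-s)\Delta}$, introducing an auxiliary exponent $r$ with $\frac{1}{r}=\frac{1}{p}+\frac{1}{q}$ for some $q$ to be chosen, and then use H\"older's inequality to split $\|u(s)\,Lz(s)\|_r\le\|u(s)\|_p\,\|Lz(s)\|_q$. Second, I would bound $\|Lz(s)\|_q$ from \rf{L} by applying \rf{lin2} to $\nabla e^{\tau^{-1}(s-\sigma)(\Delta-\gamma)}$; the factor $e^{-\tau^{-1}\gamma(s-\sigma)}\le 1$ may be dropped, so that $\gamma$ plays no role, and the rescaling $\tau^{-1}(s-\sigma)$ contributes $\tau^{1/2-1/q+1/p}$ to the bound, which combined with the prefactor $\tau^{-1}$ in \rf{L} yields the overall $\tau$-weight $\tau^{-1/2-1/q+1/p}$.

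Substituting $\|u(s)\|_p\le\xn u\xn_p\, s^{-1+1/p}$ and the analogous bound for $z$, the inner $\sigma$-integral takes the Beta form
$$\int_0^s(s-\sigma)^{-1/2+1/q-1/p}\sigma^{-1+1/p}\,d\sigma = c_1\, s^{1/q-1/2},$$
and after substitution the outer $s$-integral becomes
$$\int_0^t(t-s)^{-1/2-1/q}s^{-3/2+1/p+1/q}\,ds = c_2\, t^{-1+1/p}.$$
The time-power $t^{-1+1/p}$ exactly cancels the weight in \rf{norm}, yielding
$$t^{1-1/p}\|B(u,z)(t)\|_p\le C\,\tau^{-1/2-1/q+1/p}\,\xn u\xn_p\,\xn z\xn_p,$$
with no residual $t$- or $\gamma$-dependence, so that $\eta(\tau)=C\tau^{-1/2-1/q+1/p}$.

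The only nontrivial obstacle — and the step that forces the hypothesis $p\in\bigl(\frac43,2\bigr)$ — is the exponent bookkeeping. One must verify that a single $q$ can simultaneously satisfy the convergence conditions for the outer Beta integral ($q>2$ and $\frac{1}{p}+\frac{1}{q}>\frac{1}{2}$), the convergence condition for the inner Beta integral ($\frac{1}{p}-\frac{1}{q}<\frac{1}{2}$), the range condition $r\ge 1$ (i.e.\ $\frac{1}{p}+\frac{1}{q}\le 1$), and the negativity of the $\tau$-exponent $-\frac{1}{2}-\frac{1}{q}+\frac{1}{p}$ needed for $\eta(\tau)\to 0$. A short arithmetic check shows these cut out a non-empty window of admissible $q$ (roughly $\frac{p}{p-1}\le q<(1/p-1/2)^{-1}$) precisely when $\frac{4}{3}<p<2$. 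Once such a $q$ is fixed, the rest of the argument is a direct application of \rf{lin2}, H\"older's inequality, and the Beta identity $\int_0^t(t-s)^{a-1}s^{b-1}\,ds=t^{a+b-1}B(a,b)$.
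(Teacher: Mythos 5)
Your proposal is correct and follows essentially the same route as the paper: estimate $\|Lz(s)\|_q$ via \rf{lin2} (dropping $e^{-\gamma\tau^{-1}(\cdot)}\le 1$, which is exactly how the paper obtains $\gamma$-independence and the factor $\tau^{-1/2-1/q+1/p}$), then apply \rf{lin2} and H\"older in the outer integral with $\frac1r=\frac1p+\frac1q$, and close with the two Beta integrals. Your admissibility window $\frac{p}{p-1}\le q<\frac{2p}{2-p}$ is precisely the paper's condition \rf{pq}, nonempty exactly for $\frac43<p<2$.
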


\begin{proof}
First, we estimate the $L^q$-norm of the linear operator $L$ defined in \rf{L} acting on $z\in L^p(\R^2)$ using the estimates \rf{lin1}--\rf{lin2}. 
Assuming that $1\le p\le q\le\infty$, $p<\infty$,  and $\frac1p-\frac1q<\frac12$ we obtain 
\begin{equation}\label{estL}
\begin{split}
\|Lz(t)\|_q
 &\le C\tau^{-1}\int_0^t(\tau^{-1}(t-s))^{-1/2+1/q-1/p}{\rm e}^{-\gamma\tau^{-1}(t-s)} \|z(s)\|_p\ds \\
 &\le C\tau^{-1/2-1/q+1/p}\int_0^t(t-s)^{-1/2+1/q-1/p}s^{1/p-1}\left( \sup_{0<s\le t}s^{1-1/p}\|z(s)\|_p\right)\ds \\
&\le  C\tau^{-1/2-1/q+1/p}t^{1/q-1/2}  \left(\sup_{0<s\le t}s^{1-1/p}\|z(s)\|_p\right)
\end{split}
\end{equation}
since $-\frac12+\frac1q-\frac1p>-1$ and $\frac1p-1>-1$.  $1/p-1>-1$. 

Next, we may prove  the estimate of  the bilinear form $B$. 
In the following computations, we fix the exponents $p$ and $q$ to have 
\be
\frac43<p\le 2\le p'=\frac{p}{p-1}<q<\frac{2p}{2-p},\label{pq}
\ee 
so that $\left(\frac{p}{p-1},\frac{2p}{2-p}\right)\neq \emptyset$, and the relation 
$\frac1r=\frac1p+\frac1q<1$ defines the exponent $r\in(1,p)$. 
Consequently, we have 
$$-\frac12+\frac1p-\frac1r=-\frac12-\frac1q>-1\ \ \ {\rm and} \ \ \  \frac1q+\frac1p-\frac32>-1,$$  
as well as $\frac1p-\frac1q<\frac12$. 
Thus, using the inequality \rf{estL}, we have the following estimate of the bilinear form $B$   
\begin{equation}\label{Bp}
\begin{split}
\|B(u,z)(t)\| _p 
&\le C\int_0^t(t-s)^{-1/2+1/p-1/r}\|u(s)\,Lz(s)\|_r\ds \\ 
&\le C\int_0^t(t-s)^{-1/2+1/p-1/r}\|u(s)\|_p\|Lz(s)\|_q\ds\\ 
&\le  C\tau^{-1/2-1/q+1/p}\int_0^t(t-s)^{-1/2-1/q} s^{1/p-1}\left(\sup_{0<s\le t}s^{1-1/p}\|u(s)\|_p\right)\\
&\qquad  \times \, s^{-1/2+1/q}\left(\sup_{0<s\le t}s^{1-1/p}\|z(s)\|_p\right)\ds, 
\end{split}
\end{equation}
Since $\int_0^t (t-s)^{-1/2-1/q}s^{1/q+1/p-3/2}\ds = C(p,q) t^{1/p-1}$, recalling  definition \rf{norm} of the norm $\xn \, .\,  \xn_p$, we obtain the inequality 
$$
\xn B(u,z)\xn_p\le \eta \xn u\xn_p \xn z\xn_p,
$$
where $\eta = C\tau^{-1/2-1/q+1/p}$ with the exponent  $-\frac12-\frac1q+\frac1p<0$ 
by the last inequality in \rf{pq}. 
\end{proof}

\begin{proof}[Proof of Theorem 1.] 
Given $u_0\in{\mathcal M}(\mathbb R^2)$, by Lemma \ref{form}  we may choose $\tau(u_0)$ so large to have $\sup_{t>0}t^{1-1/p}\|{\rm e}^{t\Delta}u_0\|_p\le C\|u_0\|_{{\mathcal M}(\mathbb R^2)}<\frac{1}{4\eta(\tau)}$ for all $\tau\ge \tau(u_0)$. Thus, the existence of solutions in $\mathcal E_p$ follows by the application of Lemma \ref{fix}.

Now, using the estimates in Lemma \ref{form} we can interpolate   the estimate $u\in{\mathcal E}_p$ for $p\in \left(\frac43,2\right)$ to get $u\in {\mathcal E}_\sigma$ with $\sigma\in[1,2)$, and next extrapolate this to  for any $\sigma\in (2,\infty]$. Then, a standard reasoning involving parabolic regularization effect for nonhomogeneous heat equation (following e.g. \cite[Th. 4.1]{GMO}) will show that $u(x,t)$ is smooth on $\R^2\times(0,\infty)$. 
 
First, for the interpolation argument, let us compute 
\bea \label{B01}
\|B(u,z)(t)\|_1 &\le& C\int_0^t(t-s)^{-1/2}\|u(s)Lz(s)\|_1\ds \nonumber\\
&\le & C\int_0^t(t-s)^{-1/2}s^{1/p-1}\xn u(s)\xn_p s^{1/p'-1/2}\|Lz(s)\|_{p'}\ds\nonumber\\ & = & C\tau^{-1/2-1/p'+1/p}\xn u\xn_p \xn z\xn_p 
\eea
with the exponent $-\frac12-\frac{1}{p'}+\frac1p<0$,  by \rf{estL} with $q=p'\in(2,3)$ since $p\in\left(\frac43,2\right)$. 
This leads to 
$$
\|u(t)\|_1\le \|{\rm e}^{t\Delta}u_0\|_1+\|B(u,u)(t)\|_1\le C\|u_0\|_{{\mathcal M}(\mathbb R^2)}+C\xn u\xn_p^2,
$$
and $u\in {\mathcal E}_\sigma$ with $\sigma\in(1,2)$ follows from this and $u\in{\mathcal E}_p$ with $p<2$ by the interpolation. Here and below,   the dependence of constants $C$ on $\tau$ is not important, so we skip this.

Then for the extrapolation, given $\sigma\in(2,\infty)$ there exist $p$ and $r$ such that 
$$1<\frac{2\sigma}{2+\sigma}<r<\frac{2\sigma}{1+\sigma}<\frac43<p<2.$$ 
 Applying the proof of Lemma \ref{form} we get for $q$ defined as $\frac1q=\frac1r-\frac1p$ (so that the assumption $\frac1p-\frac1q<\frac12$ is satisfied) 
$$\| Lu(t)\|_q \le Ct^{1/q-1/2}\xn u\xn_p.$$ 
Next, we estimate the bilinear form $B$ in $L^\sigma$ as 
\bea
\|B(u,u)(t)\|_\sigma &\le & \int_0^t (t-s)^{-1/2+1/\sigma-1/r}\|u(s)Lu(s)\|_r \ds\nonumber\\
&\le& \int_0^t(t-s)^{-1/2+1/\sigma-1/r}s^{1/p-1}s^{1/q-1/2}\xn u\xn_p^2\ds\nonumber\\
&\le& Ct^{1/\sigma-1}\xn u\xn_p^2\nonumber
\eea
since $-\frac12+\frac1\sigma-\frac1r>-\frac12+\frac1\sigma-\frac{2+\sigma}{2\sigma}>-1$ and $ \frac1p-1+\frac1q-\frac12=-1+\frac1r-\frac12>-1$ as requested in the proof. 
The above inequalities lead to 
$$\xn u\xn_\sigma\le \xn {\rm e}^{t\Delta} u_0\xn_\sigma+\xn B(u,u)\xn_\sigma\le C\|u_0\|_{{\mathcal M}(\mathbb R^2)}+C\xn u\xn_p^2. $$
The proof in the case $p=\infty$ is completely analogous.

Now, we proceed to the proof that this solution attains the initial data in the weak sense, i.e. this  satisfies $u\in{\mathcal C}_{\rm w}([0,\infty);{\mathcal M}(\mathbb R^2))$. 
For that purpose first we define for a fixed $p\in\left(\frac43,2\right)$  
a subspace of ${\mathcal E}_p$ \ \ 
${\mathcal Y}=L^\infty((0,\infty); {\mathcal M}(\mathbb R^2)) \cap {\mathcal E}_p$ 
endowed with the norm 
$\|u\|_{\mathcal Y}= {\rm ess}\,\sup_{t>0}\|u(t)\|_{{\mathcal M}(\mathbb R^2)}+\xn u\xn_p$. 
We will show that Lemma \ref{fix} applies to equation \rf{D} in the space ${\mathcal Y}$, i.e. the following estimate $\|B(u,v)\|_{\mathcal Y}\le \eta(\tau)\|u\|_{\mathcal Y}\|v\|_{\mathcal Y}$ is  valid with $\eta(\tau)\to 0$ as $\tau\to \infty$. 
The $L^1$ estimate \rf{B01}  together with the bilinear bound for $B$ in the norm of ${\mathcal E}_p$ established in Lemma \ref{form}, and the estimate $\|{\rm e}^{t\Delta}u_0\|_1\le \|u_0\|_1$, show that for fixed $u_0\in {\mathcal M}(\R^2)$ and  $\tau$ sufficiently large  equation \rf{D} has a solution in ${\mathcal Y}$.

Next, to prove that the constructed solution is in fact in the space ${\mathcal X} = {\mathcal C}_{\rm w}([0,\infty); \mathcal M(\mathbb R^2))\cap {\mathcal E}_p \subset {\mathcal Y}$ we will show that $u(t)\rightharpoonup u(s)$ as $t\searrow s\ge 0$ in the sense of the weak convergence of measures. For that purpose, we need only to show that $B(u,u)(t)\rightharpoonup B(u,u)(s)$ as $t\searrow s$; in particular  $B(u,u)(t)\rightharpoonup 0$ as $t\to 0$.
Indeed, the sufficiency of that property is seen from the representation 
$$u(t)-u(s)={\rm e}^{t\Delta}u_0-{\rm e}^{s\Delta}u_0 + B(u,u)(t)-B(u,u)(s),$$ 
and from the fact that  the heat semigroup is weakly continuous in $\mathcal M(\mathbb R^2)$: ${\rm e}^{t\Delta}u_0\rightharpoonup {\rm e}^{s\Delta}u_0$ as $t\searrow s\ge 0$. 
Now we write 
\be
B(u,u)(t)-B(u,u(s)) = I_1+I_2,\label{sum}
\ee
where 
$$I_1=\int_0^s\nabla\left({\rm e}^{(s-\sigma)\Delta} -{\rm e}^{(t-\sigma)\Delta}\right)\cdot(u(\sigma)Lu(\sigma))\dsi$$ 
and 
$$I_2=-\int_s^t\nabla{\rm e}^{(t-\sigma)\Delta}\cdot(u(\sigma)Lu(\sigma))\dsi.$$ 
Putting $u=z$ in \rf{B01} we get 
\be 
\| u(\sigma)Lu(\sigma)\|_1\le C\sigma^{-1/2}.\label{el}
\ee 
Since the Gauss--Weierstrass kernel satisfies 
$$\big\|\nabla \big(G(\cdot, s-\sigma)-G(\cdot, {t-\sigma})\big)\big\|_1
\le C(t,s)(s-\sigma)^{-1/2}$$ with $C(t,s)\to 0$ as $t\searrow s\ge 0$, by 
$\int_0^t(t-\sigma)^{-1/2}\sigma^{-1/2}\dsi=\pi$ and 
the Lebesgue dominated convergence theorem we arrive at $\|I_1\|_1\to 0$. 

Concerning $I_2$ we note that if $s>0$, then by estimates \rf{el}, \rf{lin2} and $\int_s^t(t-\sigma)^{-1/2}\sigma^{-1/2}\dsi=\int_{\frac{s}{t}}^1(1-\rho)^{-1/2}\rho^{-1/2}\,{\rm d}\rho \to 0$ as $t\searrow s$, we get 
$\|I_2\|_1\to 0$ as $t\searrow s>0$. 
For $s=0$, taking a smooth compactly supported test function $\varphi\in C^1_0(\R^2)$ we see that 
\bea
& &\left| -\int \int_0^t \nabla\cdot({\rm e}^{(t-s)\Delta}(u(s)Lu(s)))\ds\, \varphi(x) \dx\right| \nonumber\\ &=& 
\left|\int_0^t\int {\rm e}^{(t-s)\Delta}(u(s)Lu(s))\cdot\nabla\varphi (x)\dx \ds\right|\nonumber\\ 
&\le& C\int_0^t\sigma^{-1/2}\dsi\ \ \|\nabla\varphi\|_\infty\to 0\nonumber
\eea
as $t\to 0$.
Since such test functions are dense in $C_0(\R^2)$ and $\|B(u,u)(t)\|_1$ are uniformly bounded for $t>0$, the weak convergence $B(u,u)(t)\rightharpoonup 0$ as $t\to 0$ follows. 
\end{proof}

\begin{rem}\label{rem:uniq}
For a fixed $\tau$ and for sufficiently small $\|u_0\|_{{\mathcal M}(\R^2)}$, the solution of \rf{pop}--\rf{ini} constructed in the space ${\mathcal X}$ is unique. 
Indeed, the  existence of the {\em unique} local-in-time solution is proved in the space ${\mathcal X}_T={\mathcal C}_{\rm w}([0,T]; {\mathcal M}(\mathbb R^2))\cap \{u:(0,T)\to L^p(\mathbb R^2)\ : \ \sup_{0<t<T}t^{1-1/p}\|u(t)\|_p<\infty\}$ for sufficiently small $T>0$. For sufficiently  small  $\|u_0\|_{{\mathcal M}(\R^2)}$ and small $T>0$,  the right-hand side of equation \rf{D} defines the contraction on a  suitably chosen ball $\{ u:\sup_{0<t<T}t^{1-1/p}\| u(t)-{\rm e}^{t\Delta}u_0\|_p \le r\}$ so that there is the unique solution $u(t)$ on $[0,T]$ which is, moreover, close to $u_0$ and to ${\rm e}^{t\Delta}u_0$. 
Then, solutions of \rf{D} become regular for all $t>0$, so that their uniqueness on the whole half-line $(0,\infty)$ is guaranteed by a standard argument, see e.g. \cite[Th. 4.5 (ii)]{GMO}. 
\end{rem}


\section*{Acknowledgements}
This work originates from discussions when the first and the third authors have visited Universidad de Chile in December 2012, supported by  FONDECYT grant 1090470, and the second author --- by FONDECYT 1130790. 
The preparation of this paper was also partially supported by  the Foundation for Polish Science operated within the Innovative Economy Operational Programme 2007--2013 funded by European Regional Development Fund (Ph.D. Programme: Mathematical Methods in Natural Sciences) and the NCN grant  2013/09/B/ST1/04412.


\end{document}